\documentclass[letterpaper,10pt]{amsart}
\usepackage{fullpage,amsfonts,amsmath,amssymb, amsthm, graphicx, subfig}
\usepackage{indentfirst}
\usepackage{tikz}
\usepackage{hyperref}
\usepackage[numbers]{natbib}
\usepackage{listings}
\lstset{language = Macaulay2}

\definecolor{egrn}{RGB}{61,142,76}
\definecolor{epurp}{RGB}{188,38,240}
\definecolor{ecomment}{RGB}{179,38,44}
\definecolor{eteal}{RGB}{42,142,155}

\lstset
{
basicstyle=\verbatim@font\small,
morecomment=[f][\color{ecomment}][0]{--},
keywordstyle = [1]\color{blue},
keywordstyle = [2]\color{epurp},
keywordstyle = [3]\color{egrn},
keywordstyle = [4]\color{eteal}
}
\usetikzlibrary{arrows}
\bibliographystyle{ksfh_nat}

\pagenumbering{arabic}

\newcommand{\Mac}{{\ttfamily Macaulay2}}

\newcommand{\SP}{{\ttfamily SimplicialPosets}}
\newcommand{\rk}{\operatorname{rank} }

\theoremstyle{definition}
\newtheorem{mydef}{Definition}
\newtheorem{Def1}[mydef]{Definition}

\newtheorem{mydef2}[mydef]{Definition}
\newtheorem{mydef3}[mydef]{Definition}
\newtheorem{separation}[mydef]{Definition}

\theoremstyle{plain}
\newtheorem{prop}{Proposition}
\newtheorem{theorem}[prop]{Theorem}
\newtheorem{theorem5}[prop]{Theorem}

\theoremstyle{plain}

\theoremstyle{definition}
\newtheorem{example}{Proposition}
\newtheorem{eg1}[example]{Example}
\newtheorem{eg2}[example]{Example}
\newtheorem{eg3}[example]{Example}
\newtheorem{eg4}[example]{Example}

\newcommand{\comment}[1]{}

\begin{document}

\title{A Macaulay2 Package for Simplicial Posets}
\author{\textsc{Nathan Nichols}\\\textsc{\small University of Minnesota (Twin Cities)}}
\date{\textsc{\small Monday, July 6, 2020}}
\begin{abstract}
       We give a description of a new \Mac\ package called \SP. This package provides functions for working with simplicial posets and calculating their generalized Stanley--Reisner ideals. For practical purposes, we also introduce of a new random model for a class of simplicial posets which generalizes existing models for random simplicial complexes such as the Kahle model.
\end{abstract}
\subjclass[2010]{Primary 05E40; Secondary 05-04}

\maketitle

\section{Introduction}

A simplicial poset $P$ is a poset with a unique minimum element $\hat{0}$ such that for any element $v\in P$, the lower set $L_v = [\hat{0},v]$ is a boolean lattice. Simplicial posets were originally introduced by Stanley in \cite{MR1117642}, who defined an associated ring $A_P$ that generalizes the Stanley--Reisner ring of a simplicial complex. In his original paper, Stanley shows that certain properties of simplicial posets are easier to prove than their analogs for simplicial complexes.

The primary purpose of the package \textit{SimplicialPosets} is to add functionality for working with simplicial posets and their associated generalized Stanley--Reisner rings to Macaulay2. This expands the existing capability in Macaulay2 of defining simplicial complexes and computing their Stanley--Reisner rings. Section \ref{basicOps} is devoted to describing the core features of this package. 

The purpose of Section \ref{mathSection} is to mathematically define a deterministic function $\Theta(\Delta_1, \Delta_2)$ that takes two simplicial complexes and produces a simplicial poset. This provides a way of defining a class of \textit{weak simplicial posets} to which existing models of random simplicial complexes can be generalized by setting $\Delta_1$ and $\Delta_2$ to random variables. A more precise description of the properties that define these weak simplicial posets is proven in Theorem \ref{thm5}. For readers who are interested in learning more about the topic of random simplicial complexes, the work of De Loera et al \cite{MR3906176} and the survey by Kahle \cite{MR3290093} are recommended.

\section{Basic operations}\label{basicOps}
A basic capability of this package is to check whether or not a given poset is a simplicial poset. This is done using the function {\ttfamily isSimplicial}:
\begin{lstlisting}
i2 : B := booleanLattice 4;
i3 : isSimplicial B
o3 = true
\end{lstlisting}
Given a poset that is assumed to be simplicial, the function {\ttfamily isFacePoset} determines whether or not it is isomorphic to the face poset of some simplicial complex. For example:
\begin{lstlisting}
i1 : gndR = QQ[a,b,c,d,e];
i2 : P = facePoset simplicialComplex({a*b*c,b*c,a*e});
i3 : isFacePoset P
o3 = true
\end{lstlisting}
An important capability of this package is calculating the generalized Stanley--Reisner ideal associated with a simplicial poset as defined by Stanley in \citep{MR1117642}. Here is an example demonstrating that the Stanley--Reisner ideal of a simplicial complex is equal to the Stanley simplicial poset ideal of its face poset:
\begin{lstlisting}
i1 : gndR = QQ[a,b,c,d];
i2 : C = simplicialComplex({a*b*c,b*c*d});
i3 : P = facePoset C;
i4 : I1 = minimalPresentation stanleyPosetIdeal P;
i5 : I2 = ideal(C);
i6 : M = map(ring I2, ring I1, vars ring I2);
i7 : I2 == M(I1)
o7 = true
\end{lstlisting}

These previous three examples do not make use of any simplicial posets that are not isomorphic to the face poset of some simplicial complex. Defining more complicated simplicial posets is a task that requires some more thorough explanation.

Let $A,B$ be two simplicial posets such that $A\cap B = \hat{0}$ and let $X\subseteq A$ and $Y\subseteq B$ be order ideals. If $X\cong Y$ by an isomorphism $\sigma:X\to Y$, $A$ and $B$ can be \textit{glued} together by forming a quotient poset $(X\cup Y) / \backsim$ where $\backsim$ is the relation that equates $v\in X$ with $\sigma(v)\in Y$ while leaving all other elements of $X\cup Y$ in singleton equivalence classes. The equivalence classes of this relation are ordered by the rule $[v]\leq [w]$ if and only if there exists $v'\in [v]$  and $w'\in [w]$ such that  $v' \leq w'$.

In this software package, the operation {\ttfamily deltaGlue} glues two simplicial posets along isomorphic order ideal $X,Y$. To fully specify the isomorphism between the order ideals $X\subseteq A$ and $Y\subseteq B$, we require a hash table that associates the atoms of $X$ with the atoms of $Y$ and another hash table that sends the maximal elements of $X$ to the maximal elements of $Y$. Here is an example: 
\begin{lstlisting}
i1 : A = booleanLattice 4;
i2 : B = naturalLabeling booleanLattice 4;
i3 : HT1 = new HashTable from {"1110" => 11, "0111" => 14};
i4 : HT2 = new HashTable from {"1000" => 1, "0100" => 2, "0010" => 3, "0001" => 4};
i5 : P = deltaGlue(A, B, HT1, HT2);
\end{lstlisting}

The resulting poset $P$ has two maximal elements that correspond to two distinct 3-dimensional simplices on the same 4 points. A caveat of the function {\ttfamily deltaGlue} is that it requires the assignment of facets to be consistent with the assignment of atoms.  For example, consider the following commands:
\begin{lstlisting}
i1 : A = booleanLattice 4;
i2 : B = naturalLabeling booleanLattice 4;
i3 : HT1 = new HashTable from {"1110" => 11, "0111" => 14};
i4 : HT2 = new HashTable from {"1000" => 4, "0100" => 2, "0010" => 3, "0001" => 1};
i5 : P = deltaGlue(A,B, HT1, HT2);
error: Assignment of atoms-atoms or facets-facets invalid.
\end{lstlisting}

The reason for this error is that the function {\ttfamily deltaGlue} expects the atoms below a maximal element {\ttfamily x} of $X$ to be mapped to the atoms below the element {\ttfamily HT1\#x} of $Y$. Here, the atoms below {\ttfamily "0111"} in {\ttfamily A} are {\ttfamily \{"0100", "0010", "0001"\}} and the atoms below {\ttfamily HT1\#"0111" = 14} in {\ttfamily B} are {\ttfamily \{4, 2, 3\}}, but {\ttfamily \{"0100", "0010", "0001"\}} does not map to {\ttfamily\{4, 2, 3\}} under the hash table {\ttfamily HT2}. Namely, {\ttfamily HT2\#"0001"} is {\ttfamily 1}.

\section{The $\Theta$ gluing function}\label{mathSection}

Recall that simplicial posets arise as the posets of cells of a $\Delta$-complex ordered by inclusion. Topological constructions on $\Delta$-complexes such as taking a quotient space can be thought of in terms of the combinatorial operations that they induce on the $\Delta$-complex's simplicial poset of cells. In this section, we will frequently refer to the operation of taking a quotient space. It will be more convenient to work directly with the combinatorial definitions instead of their topological counterparts.

This construction is restated from a paper due to Bj\"{o}rner \cite{MR746039} but was originally defined by Garsia and Stanton \cite{MR736732}. Here, $L_a$ denotes the lower set of the poset element $a$.
\begin{mydef}[\cite{MR746039}, Section 2.3]\label{gluing}
Let $P$ be the face poset of a simplicial complex $\Delta$. An equivalence relation $\backsim$ on P is called a \textit{gluing relation} if it has the following properties:

\begin{enumerate} 
  \item $\tau\backsim\sigma$ and $\sigma \neq \tau$ implies that $\tau$ and $\sigma$ are incomparable with $\rk(\tau)=\rk(\sigma)$. Also, $\tau$ and $\sigma$ must have no common upper bound.
  \item If $\tau\backsim\sigma$, then every element of $L_{\tau}$ must also be related (under the relation $\backsim$) to some element of $L_{\sigma}$.
\end{enumerate}

A poset $P/\backsim$ is defined as the set of equivalence classes of $\backsim$ ordered by the following rule:

$$C_1 \leq C_2 \iff \exists v\in C_1 \textrm{ s.t. } v \leq w \textrm{ for some }w\in C_2$$

\end{mydef}

In \cite{MR746039}, Bj\"{o}rner states that all simplicial posets (which are referred to as ``posets of boolean type") can be constructed as a quotient by a gluing relation on some simplicial complex. More precisely stated, it is known that for any simplicial poset $P$ there exists a simplicial complex $\Delta$ and a gluing relation $\approx$ on $\Delta$ such that $P \cong \Delta/\approx$. 

In general, the simplicial complex $\Delta$ and the gluing relation $\approx$ are not unique. For the purposes of defining the function $\Theta$, it will be informative to prove this fact by constructing a canonical simplicial complex and a canonical gluing relation from which an arbitrary simplicial poset $P$ can be realized as a quotient. 

\begin{separation}\label{defseparation}
Let $Q$ be an arbitrary simplicial poset. Let $F$ be the set of maximal vertices of $Q$. Define a simplicial poset $P$ called the {\textit{separation}} of $Q$ as follows:

$$P = [\bigsqcup\limits_{x\in F}(L_x-\hat{0})]\cup\hat{0}$$

Two elements $(i,v),(j,w)\in P$ are comparable if and only if $i=j$. If $(i,v)$ is comparable to $(j,w)$, $(i,v)\leq (j,w)$ if and only if  $v\leq w$. The reason why the element $\hat{0}$ is subtracted from each term in the disjoint union is to ensure that the minimum element of $P$ is unique. 
\end{separation}

Note that the separation of a simplicial poset is always the face poset of a simplicial complex.

\begin{theorem}\label{gluingThm} Let $P'$ be the separation of simplicial poset $P$. Then, $P$ can be realized (up to isomorphism) as a gluing of $P'$. 
\end{theorem}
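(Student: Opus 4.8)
The plan is to exhibit an explicit gluing relation on $P'$ whose quotient is isomorphic to $P$. I would write the elements of $P'$ as pairs $(x,v)$ with $x\in F$ a maximal element of $P$ and $v\in L_x-\hat{0}$, together with the adjoined minimum $\hat{0}$. Define the natural projection $\pi\colon P'\to P$ by $\pi(x,v)=v$ and $\pi(\hat{0})=\hat{0}$; this is well defined because each copy $L_x-\hat{0}$ is literally the interval $[\hat{0},x]\subseteq P$ with its bottom removed. Since every nonminimal element of $P$ lies below some maximal element, $\pi$ is surjective, and because comparabilities in $P'$ occur only within a single component, $\pi$ is order-preserving. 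I would then take $\backsim$ to be the fibers of $\pi$, that is $a\backsim b \iff \pi(a)=\pi(b)$, and argue that $\backsim$ is a gluing relation in the sense of Definition \ref{gluing} and that $P'/\backsim\cong P$. Note that $P'$ is the face poset of a simplicial complex, as remarked immediately before the statement, so Definition \ref{gluing} applies.

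Next I would verify the two axioms of a gluing relation. Suppose $(x,v)\backsim(y,w)$ are distinct; then $v=w$ and necessarily $x\neq y$, so the two elements lie in different components. Hence they are incomparable and have no common upper bound, since any upper bound of $(x,v)$ must lie in component $x$ and any upper bound of $(y,v)$ in component $y$. Their ranks agree because $\rk(x,v)$ in $P'$ equals the rank of $v$ in $[\hat{0},x]$, which is the rank of $v$ in $P$; this gives axiom (1). For axiom (2), the elements below $(x,v)$ are exactly the $(x,u)$ with $u\le v$, together with $\hat{0}$; since $v$ lies below both maximal elements $x$ and $y$, so does each such $u$, so $(y,u)$ exists in $P'$, lies below $(y,w)$, and satisfies $(x,u)\backsim(y,u)$. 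Thus every element of $L_{(x,v)}$ is related to an element of $L_{(y,w)}$.

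Finally I would check that the induced map $\bar{\pi}\colon P'/\backsim\to P$, $[a]\mapsto\pi(a)$, is an order isomorphism. It is a well-defined bijection because the classes of $\backsim$ are precisely the fibers of the surjection $\pi$. Order preservation in one direction is immediate from $\pi$ being order-preserving. For the converse, if $\pi(a)\le\pi(b)$ in $P$, write $v=\pi(a)$ and $w=\pi(b)$ and choose a representative $b=(y,w)$ (or $\hat{0}$); then $v\le w\le y$ forces $(y,v)\in P'$ with $(y,v)\le(y,w)$ and $(y,v)\backsim a$, so $[a]\le[b]$ in the quotient order. This shows $\bar{\pi}$ reflects order as well, completing the isomorphism.

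The step I expect to be the main obstacle is this converse order-preservation: the quotient order on $P'/\backsim$ is defined existentially, through the existence of a pair of comparable representatives, so recovering exactly the order of $P$ requires relocating both representatives into a common component. This is precisely where the hypothesis that $P$ is simplicial is essential, since it forces each $L_x$ to be boolean and guarantees that the entire down-set of any element remains inside every facet containing it. By comparison, the gluing axioms themselves are fairly routine once the projection $\pi$ is in hand.
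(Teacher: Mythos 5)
Your proposal is correct and follows exactly the route the paper takes: the paper defines the projection $\sigma(i,v)=v$, declares $\backsim$ to be its fibers, and asserts that the verification of the gluing axioms and of the induced isomorphism is straightforward. Your write-up simply supplies the details the paper omits, so there is nothing to reconcile.
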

\begin{proof}
Vertices of $v'\in P'$ are of the form $v' = (i, v)$ for some $i\in\mathbb{N}$ and $v\in P$. Let $\sigma$ be the function defined by $(i, v)\mapsto v$. Next, define a relation $\backsim$ on $P'$:
$$v \backsim w \iff \sigma(v)= \sigma(w)$$
It is straightforward to verify that $\backsim$ is a gluing relation and $\sigma$ is an isomorphism from $P'/\backsim$ to $P$.
\end{proof}

Let $\Delta_1$ and $\Delta_2$ be simplicial complexes on a set of points $G$. In this paper, a ``simplicial complex on the points $G$" is assumed to actually contain all the points in $G$. The reason why we adopt this non-standard convention is because the function $\Theta(\Delta_1, \Delta_2)$ will require $\Delta_1$ and $\Delta_2$ to have the \textit{the same} sets of points. If $\Delta_1$ does not contain some point $p\in G$, it can be considered as a simplicial complex on the set of points $G':= G\setminus \{p\}$ instead.

Let $P_1$ and $P_2$ be the face posets of $\Delta_1$ and $\Delta_2$ respectively. Let $P_1'$ be the separation of $P_1$ and $\sigma:P_1'\to P_1$ be defined as above. We define another relation $\doteq$ on $P_1'$ as follows:
$$a\doteq b \iff \sigma(a)=\sigma(b)\textrm{ and }\sigma(a)\in P_1 \cap P_2$$

It is relatively easy to check that the relation $\doteq$ is a gluing relation. The relation $\doteq$ also has the property that its equivalence classes refine the partition induced by the relation $\backsim$. 

\begin{mydef2}\label{thetaDef}
For simplicial complexes $\Delta_1,\Delta_2$ on the same set of points, 
$$\Theta(\Delta_1, \Delta_2) := P_1'/\doteq$$
\end{mydef2}

At this point, the properties of the function $\Theta$ (namely, what simplicial posets it may be used to construct) are not obvious. In Theorem \ref{thm5}, we will prove a necessary and sufficient condition for a given simplicial poset to be of the form $\Theta(\Delta_1, \Delta_2)$ for some simplicial complexes $\Delta_1, \Delta_2$. First, it may be informative to look at an example of how $\Theta$ can be used to construct a simplicial poset which is not the face poset of a simplicial complex:

\begin{eg3}\label{ex3}
Let $\Delta$ be any simplicial complex on the set of points $[n]=\{1,\ldots, n\}$ and let $x,y$ be unique symbols not contained in $[n]$. Define $\hat{\Delta}$ to be the simplicial complex on the points $G=[n]\cup \{x,y\}$ determined by the following two properties:

\begin{enumerate}
\item[1.] If $f\subseteq [n]$ is a facet of $\Delta$, then $f$ is also a facet of $\hat{\Delta}$.
\item[2.] The sets $\{x\}$ and $\{y\}$ are facets of $\hat{\Delta}$.
\end{enumerate}

Let $\Omega$ be the simplicial complex on the points $G$ whose facets are precisely $[n]\cup \{x\}$ and $[n]\cup \{y\}$. The poset $P=\Theta(\Omega,\hat{\Delta})$ has two maximal elements which correspond to the two facets of $\Omega$. The lower sets of these two maximal elements have an intersection isomorphic to the face poset of the simplicial complex $\hat{\Delta}$. Unless $\Delta$ is a simplex on $[n]$, $P$ is not the face poset of a simplicial complex because the intersection of the lower sets of its maximal elements is not a boolean algebra.
\end{eg3}
The following definitions will be useful for proving Theorem \ref{thm5}:
\begin{Def1}\label{atomFamDef}
Let $P$ be a simplicial poset. For a maximal element $v\in P$, let $A(v)$ denote the set of atoms of $P$ below $v$. Define $A(P)$ as the following subset family:
$$A(P):=\{A(v) : v \textrm{ maximal in P}\}$$

$A(P)$ will be called the \textit{atom family} of $P$.
\end{Def1}

\begin{mydef3}\label{meetDef}
Let $P$ be a simplicial poset with set $F$ of maximal elements. Define the \textit{meet poset} $M(P)$ as follows:
$$M(P) =\bigcup_{\substack{ A,B\in F \\ A\neq B}}(L_A\cap L_B)$$
\end{mydef3}
\begin{eg4}
The poset $P=\Theta(\Omega, \hat{\Delta})$ defined in Example \ref{ex3} has the property that $M(P)\cong \Delta$.
\end{eg4}
We now proceed to prove the main theorem of this section.

\begin{theorem5}\label{thm5}
Let $P$ be a simplicial poset. Consider the following propositions:
\begin{enumerate} 
  \item[(i)] As a subset family of the atoms of P, $A(P)$ is an antichain (i.e., there is no element of A(P) that contains another element of A(P).)
  \item[(ii)] $M(P)$ is isomorphic to the face poset of some simplicial complex.
\end{enumerate}
     Claim: $P$ is equal to $\Theta(\Delta_1,\Delta_2)$ (up to isomorphism) for some simplicial complexes $\Delta_1,\Delta_2$ if and only if (i) and (ii) hold.
\end{theorem5}
\begin{proof}

Assume $P \cong \Theta(\Delta_1,\Delta_2) = (P'_1/\doteq)$ for simplicial complexes $\Delta_1$ and $\Delta_2$. It must be shown that conditions (i) and (ii) hold:

\begin{enumerate}
\item[i):] Let $f:P_1' \to P_1'/\doteq$ and $g:P_1' \to P_1'/\backsim$ be the canonical quotient maps. By Definition \ref{thetaDef}, $\Delta_1$ and $\Delta_2$ must include the same set of points. In particular, all atoms of $P_1$ are always contained in the intersection $P_1 \cap P_2$. An implication of this is that $\doteq$ and $\backsim$ are equivalent relations when restricted to the set of atoms of $P_1'$. This implies that for all atoms $a\in P_1'$, $f(a) = g(a)$.

The lower sets of maximal elements of $P_1'$ map surjectively to the lower sets of maximal elements under the quotient maps $f,g$. Thus, $A(P_1'/\doteq)$ and $A(P_1'/\backsim)$ can be written in terms of $A(P_1')$ as follows:
$$A(P_1'/\doteq) = \biggr\{\{f(x)\ |\ x\in X\}\ \biggr|\ X\in A(P_1')\biggr\}$$
$$A(P_1'/\backsim) = \biggr\{\{g(x)\ |\ x\in X\}\ \biggr|\ X\in A(P_1')\biggr\}$$
\noindent Because $f(a)=g(a)$ for all atoms $a\in P_1'$, we may conclude that: 
$$A(P_1'/\doteq) = A(P_1'/\backsim)$$
Recall from the proof of Theorem \ref{gluingThm} that $P_1'/\backsim$ is isomorphic to the face poset of the simplicial complex $\Delta_1$. Since it is well-known that the face poset of any simplicial complex satisfies property (i), this proves that $A(P)$ also satisfies property (i).
\bigskip
\item[ii):] By Theorem \ref{gluingThm}, $(P_1'/\backsim) \cong P_1$ where $P_1$ is the face poset of $\Delta_1$. By taking the meet poset of both sides, we obtain:
$$M(P_1'/\backsim) \cong M(P_1)$$
\noindent It is relatively easy to see that in the case of $\doteq$, the above equation becomes:  
$$M(P_1'/\doteq) \cong M(P_1) \cap P_2$$

Formally, the face poset of an (abstract) simplicial complex $\Delta$ is the same set as $\Delta$ except considered to have an ordering by inclusion. An abstract simplicial complex is the union of a set of simplices on the same set of points, and the intersection of any two simplices is always a simplex. Thus, $M(P_1)$ is the face poset of a simplicial complex that we may denote  $M(\Delta_1)$ and $M(P_1) \cap P_2$ is the face poset of the simplicial complex $M(\Delta_1) \cap \Delta_2$.
\end{enumerate}
\medskip
\noindent For the other direction, assume that (i) and (ii) hold. We will construct simplicial complexes $\Delta_1$ and $\Delta_2$ such that $P\cong \Theta(\Delta_1, \Delta_2)$:
\medskip

Because $P$ satisfies property (i), there exists a unique simplicial complex $\Delta_1$ whose facets are the sets in $A(P)$. Since $P$ satisfies property (ii), there exists a unique simplicial complex $\Delta_2=M(P_1)\cup G$ where $G$ is the set of points of $\Delta_1$. This choice of $\Delta_1$ and $\Delta_2$ is such that: 
\begin{equation}
\begin{split}
P_1 \cap P_2 &= P_1 \cap (M(P_1)\cup G)\\
&= P_1 \cap M(P_1) = M(P_1)\\
\end{split}
\end{equation}
It is relatively easy to see that this implies that $\doteq$ and $\backsim$ are equivalent relations. This implies the following equation:
$$(P_1'/\doteq) = (P_1'/\backsim) = \Theta(\Delta_1,\Delta_2)$$
\noindent By the proof of Theorem \ref{gluingThm}, there is a canonical isomorphism $(P_1'/\backsim)\to P$.

\end{proof}

Here are two examples that show properties (i) and (ii) are independent of each other:

\begin{eg1}\label{ex4}
The following is the Hasse diagram of a simplicial poset that satisfies (ii) but not (i):
\begin{figure}[h]
\centering
\begin{tikzpicture}[scale=0.75, vertices/.style={draw, fill=black, circle, inner sep=0pt}]
             \node [vertices, label=right:{0}] (0) at (-0+0,0){};
             \node [vertices, label=left:{x}] (1) at (-.75+0,1.33333){};
             \node [vertices, label=right:{y}] (2) at (-.75+1.5,1.33333){};
             \node [vertices, label=left:{$\ell_1$}] (4) at (-.75+0,2.66667){};
             \node [vertices, label=right:{$\ell_2$}] (3) at (-.75+1.5,2.66667){};
     \foreach \to/\from in {0/1, 0/2, 1/4, 1/3, 2/4, 2/3}
     \draw [-] (\to)--(\from);
     \end{tikzpicture}
\end{figure}
\end{eg1}   

\begin{eg2}\label{example4} The following $\Delta$-complex has a face poset $P$ such that $M(P)$ is the simplicial poset from Example \ref{ex4}.

\begin{figure}[h]
\centering
\def\sep{1/2}

\def\baryx{0.3}
\def\baryy{0.3}
\usetikzlibrary{patterns}
\def\sep{5}
\def\baryx{0.5}
\def\baryy{0.5}
\def\angAA{135}
\def\angAB{225}
\def\angBA{45}
\def\angBB{315}
\begin{tikzpicture}
    \tikzstyle{point}=[circle,thick,draw=black,fill=black,inner sep=0pt,minimum width=4pt,minimum height=4pt]
    \node (x)[point,label=below:x] at (0,-2) {};
    \node (y)[point,label=above:y] at (0,2) {};
    \node (a)[point,label=left:{a}] at (-\sep,2) {};
    \node (b)[point,label=left:{b}] at (-\sep,-2) {};
    \node (c)[point,label=right:{c}] at (\sep,2) {};
    \node (d)[point,label=right:{d}] at (\sep,-2) {};

    \draw [thick] (x.center) to[out=135,in=225] (y.center);
    \draw [thick] (y.center) to[out=\angBB,in=\angBA] (x.center);

	\draw[fill=red, fill opacity=0.5] (c.center) -- (x.center) to[out=\angBA,in=\angBB]  (y.center) -- cycle;
	\draw[fill=yellow, fill opacity=0.5] (a.center) -- (y.center) to[out=\angAB,in=\angAA]  (x.center) -- cycle;
	\draw[fill=blue, fill opacity=0.5] (b.center) -- (x.center) to[out=\angAA,in=\angAB]  (y.center) -- cycle;
	\draw[fill=green, fill opacity=0.5] (d.center) -- (y.center) to[out=\angBB,in=\angBA]  (x.center) -- cycle;

	\node at (barycentric cs:a=1,x=\baryx,y=\baryy) {\large$m_1$};
	\node at (barycentric cs:b=1,x=\baryx ,y=\baryy) {\large$m_2$};
	\node at (barycentric cs:c=1,x=\baryx,y=\baryy) {\large$m_3$};
	\node at (barycentric cs:d=1,x=\baryx ,y=\baryy) {\large$m_4$};
    \node (p1)[,label=right:{\large$\ell_1$}] at (-1,0) {};
    \node (p2)[label=left:{\large$\ell_2$}] at (1,0) {};
\end{tikzpicture}
\end{figure}
\noindent The atom family of this $\Delta$-complex is $\{\{a,x,y\}, \{b,x,y\}, \{c,x,y\}, \{d,x,y\}\}$. Hence, its face poset satisfies (i) but not (ii).
\end{eg2}

\section{Random simplicial posets}\label{moreexamples}

In the package \SP, the function $\Theta$ can be computed using the function {\ttfamily thetaGlue}. Here is an example of how {\ttfamily thetaGlue} can be used to define a simplicial poset that is not the face poset of a simplicial complex:

\begin{lstlisting}
i1 : gndR = QQ[a,b,c,x,y];
i2 : A = simplicialComplex({a*b*c*x, a*b*c*y});
i3 : B = simplicialComplex({a*b,b*c,a*c});
i4 : P = thetaGlue(A,B);
i5 : isFacePoset P
o5 = false
\end{lstlisting}

In this example, the meet poset of the face poset of $A$ is $|\ abc\ |$. The resulting simplicial poset $P$ is not a face poset of a simplicial complex because $|\ abc\ | \cap B = |\ ab\ bc\ ac\ |\neq |\ abc\ |$.

The function {\ttfamily randSimplicialPoset(n, p1, p2)} is {\ttfamily thetaGlue(A,B)} in the special case that {\ttfamily A} and {\ttfamily B} are both random simplicial posets generated using the Kahle model as  defined in \cite{MR2510573}. The argument {\ttfamily n} is the number of points and {\ttfamily p1}, {\ttfamily p2} are Erd\H{o}s--R\'enyi model probability parameters for the simplicial complexes {\ttfamily A}, {\ttfamily B} respectively. Here is an example of how {\ttfamily randSimplicialPoset} can be used to perform an experiment involving random simplicial posets:
\begin{lstlisting}
i1 : D = 1..100;
i2 : L = apply(D, i-> if isFacePoset randSimplicialPoset(6,0.5,0.5) then 1 else 0);
i3 : tally L
o3 = Tally{0 => 40}
           1 => 60
\end{lstlisting}
The function {\ttfamily meetPoset} returns $M(P)$ as defined in Definition \ref{meetDef}. Also, the function {\ttfamily atomFamily} returns the atom family $A(P)$ as defined in Definition \ref{atomFamDef}. Together with the function {\ttfamily isAntichainList}, it is possible to verify that a given simplicial poset satisfies the conditions (i) and (ii) of Theorem \ref{thm5}:
\begin{lstlisting}
i1 : P = randSimplicialPoset(5,0.75,0.75);
i2 : isAntichainList atomFamily P
o2 = true
i3 : isFacePoset meetPoset P
o3 = true
\end{lstlisting}
\section{Acknowledgements}
 Thank you to Victor Reiner for being an invaluable mentor and Mahrud Sayrafi for encouraging me to undertake this project. I am also highly grateful to the reviewers for providing insightful feedback.

\medskip

\bibliographystyle{plainnat}
\bibliography{main}

\end{document}